\theoremstyle{plain}
\newtheorem{theorem}{Theorem}
\newtheorem{lemma}[theorem]{Lemma}
\theoremstyle{definition}
\newtheorem{definition}[theorem]{Definition}
\newcommand{\Z}{\mathbb{Z}}
\newcommand{\ciX}[2]{\put(#1){\circle{20}\put(-7.6,-2.6){#2}}}
\begin{document}

\title[Submonoids of groups, and relation algebras]%
{Submonoids of groups, and group-representability of\\
restricted relation algebras}

\author{George M. Bergman}
\email{gbergman@math.berkeley.edu}
\urladdr{https://math.berkeley.edu/~gbergman}
\address{Department of Mathematics\\University of California\\
Berkeley, CA 94720-3840, USA}

\thanks{Work partly supported by NSF contract MCS~80-02317.
(The author apologizes for the long delay in publishing this
1981 result.)\\
\hspace*{1.6em}
\url{http://arxiv.org/abs/1702.06088}\,.
After publication of this note, updates, errata, related references
etc., if found, will be recorded at
\url{http://math.berkeley.edu/~gbergman/papers/}
}

\dedicatory{To the memory of Bjarni J\'{o}nsson}

\subjclass[2010]{Primary: 03G15; Secondary: 06A06, 20M20.}

\keywords{monoid, group, relation algebra, Hilbert's Hotel}

\begin{abstract}
Marek Kuczma asked in 1980 whether for every positive integer $n$,
there exists a subsemigroup $M$ of a group $G$, such that
$G$ is equal to the $n$-fold product
$M\,M^{-1} M\,M^{-1} \dots\,M^{(-1)^{n-1}}$, but not to any
proper initial subproduct of this product.
We answer his question affirmatively,
and prove a more general result on representing
a certain sort of relation algebra by subsets of a group.
We also sketch several variants of the latter result.
\end{abstract}
\maketitle

\section{Introduction}\label{S.intro}

M.\,Kuczma \cite[Problem P190, p.\,304]{Tagung}
raised the question quoted in
the Abstract for $n=3$ in particular, and also for arbitrary $n$.
The case $n=3$ was answered affirmatively by an example
of W.\,Benz~\cite[Remark P190S1, p.\,305]{Tagung}.
We sketch in Section~\ref{S.u_d} a construction that
works for all $n$, then prove in Section~\ref{S.main} a general result,
Theorem~\ref{T.main}, from which, as we show in~Section~\ref{SS.u_d},
the asserted behavior of that example follows.

In Section~\ref{S.variants} we look at some variants (and further
possible variants) of Theorem~\ref{T.main}.
In particular, in Section~\ref{SS.BJ} we note a class of operations
on binary relations, described in B.\,J\'{o}nsson's
survey paper~\cite{BJ}, which that theorem can be extended
to cover.

\section{Sketch of the construction answering Kuczma's question}\label{S.u_d}

If $G$ is a group of permutations of a set $X$, we shall write
elements of $G$ to the right of elements of $X$, and compose
them accordingly.

Given a positive integer $n$, let $X_1,\dots,X_n$ be disjoint
\emph{infinite} sets, let $X=X_1\cup\,\dots\,\cup X_n$, and
let $G$ be the group of those permutations $g$ of $X$
such that for all but finitely many $x\in X$,
the element $xg$ lies in the same set $X_i$ as does $x$.
Note that since the $X_i$ are infinite, the (finite) number of
elements carried into a given $X_i$ from elsewhere by a given $g\in G$
need not equal the (finite) number moved out of $X_i$.
Indeed, given any finite set of elements of $X$,
and any assignment of a destination-set $X_i$ for each of them,
one can construct a $g\in G$ which realizes these movements,
and keeps all other elements in their original sets.
(In achieving such a rearrangement,
if the number of newcomers assigned to some $X_i$
is not equal to the number of elements specified to leave $X_i$,
then infinitely many elements can be moved within $X_i$,
as in ``Hilbert's Hotel'' \cite[p.\,17]{123oo}, to accommodate
these relocations.)

Now let $M\subseteq G$ be the submonoid consisting of
those $g$ which involve no movement of elements from one
$X_i$ to a different one, except for transitions from even-indexed sets
$X_{2i}$ to \emph{adjacent} odd-indexed sets, $X_{2i-1}$ and
$X_{2i+1}$, as suggested by the picture~\eqref{d.u_d} below.
(For concreteness, $n$ is there assumed even.)
Dotted arrows show directed paths along
which finitely many elements may move.
\begin{equation}\begin{minipage}[c]{23pc}\label{d.u_d}
\begin{center}
\begin{picture}(220,70)
\ciX{10,55}{$X_1$}
\put(20,45){\dottedline{2}(0,0)(20,-20)
   {\vector(-1,1){0}}}
\ciX{50,15}{$X_2$}
\put(80,45){\dottedline{2}(0,0)(-20,-20)
   {\vector(1,1){0}}}
\ciX{90,55}{$X_3$}
\put(100,45){\dottedline{2}(0,0)(20,-20)
   {\vector(-1,1){0}}}
\ciX{130,15}{}
\put(160,45){\dottedline{2}(0,0)(-20,-20)
   {\vector(1,1){0}}}
\ciX{170,55}{}
\put(180,45){\dottedline{2}(0,0)(20,-20)
   {\vector(-1,1){0}}}
\ciX{210,15}{$X_n$}
\end{picture}
\end{center}
\end{minipage}\end{equation}
Clearly, $M^{-1}$ is the submonoid of $G$
corresponding to the same diagram with the dotted arrows reversed.

One finds that using $n$ elements of $M$
and $M^{-1}$, acting alternately, starting with an element of $M$,
one can achieve the action
of any element of $G$; but that one cannot in general represent
such an element using a shorter product of this sort.
The reader might think this through now, or wait for the details
that will be given in Sections~\ref{S.defs}-\ref{S.appl}.
In convincing oneself that $n$ factors do suffice,
it can be helpful to think of first finding a sequence of
$n$ elements, alternately from $M$ and from $M^{-1}$, that together
bring every
element into the correct $X_i$, then modifying the last of these
factors by a permutation which preserves each of the sets $X_i$,
and simply rearranges elements within those sets as needed.
One may ask why $n-1$ factors $M M^{-1}\dots$ do not suffice,
since there are only $n-1$ links in~\eqref{d.u_d}.
The reason is that if an element of $X_1$ is to travel to $X_n$,
it has to wait for the second factor, $M^{-1}$, to begin
this journey, since the initial factor $M$ does not move elements
out of $X_1$.
(And if $n$ is odd, the same applies to an element traveling
from $X_n$ to $X_1$.)

Assuming the above handwaving filled in, as will be done
below, this construction
answers Kuczma's question affirmatively for general~$n$.

\section{Sketch of the motivation for a more general result}\label{S.abstr}

The definition of $M$ above can be thought of as based
on a partial ordering $\preccurlyeq$ on the index set $\{1,\dots,n\}$,
under which the only comparability relations say that
each even element is less than the two adjacent odd elements
(cf.~\eqref{d.u_d}).
The monoid $M$ embodies this partial ordering in
allowing elements to move from $X_i$ to $X_j$ if $i\preccurlyeq j$.
The monoid $M^{-1}$ similarly embodies the reverse partial ordering.

What do sets such as $M\,M^{-1}$ look like?
Application of an element $g h^{-1}$ $(g,h\in M)$ allows elements to
move from $X_i$ to $X_j$ if $(i,j)$ belongs to
$\{(i,j):(\exists\,k)\ i\preccurlyeq k\succcurlyeq j\}$,
the \emph{composite}
of the binary relations $\preccurlyeq$ and $\succcurlyeq$ on $X$.
So we can think of the construction of the preceding section
as starting with a binary relation $\preccurlyeq$ on the
set $I=\{i,\dots,n\}$, obtaining from it a subset $M$
of $G$, and applying the operations of elementwise inversion of subsets
of $G$, and multiplication of such subsets, to get
subsets similarly determined by various composites of
the binary relation $\preccurlyeq$ and its inverse ${\succcurlyeq}$.
The maximal binary relation,
$I\times I$, is reached as a certain $n$-fold composite of
$\preccurlyeq$ and $\succcurlyeq$, but not as a
proper initial subcomposite thereof, and
this turns out to imply that we get all of $G$ as
the corresponding $n$-fold product of $M$ and $M^{-1}$,
but not as a proper initial subproduct thereof.

Not every binary relation on $I$ determines
a nonempty subset of $G$, since members of $G$ by definition
leave all but finitely many
elements of each $X_i$ in $X_i$; so let us restrict the relations
on $I$ from which we form such subsets to be reflexive, i.e.,
to contain the diagonal relation $\{(i,i):i\in I\}$.
Not all of the resulting subsets will be submonoids:
the fact that the $M$ discussed above was closed
under multiplication is a consequence of the fact
that the relation $\preccurlyeq$, a partial ordering, is transitive.
But, for example, $M\,M^{-1}$, corresponding to the
relation $\preccurlyeq\circ\succcurlyeq$, is not a submonoid if $n>2$.

So far, we have been very sketchy.
The next two sections develop in detail the
result suggested by these observations: that the system of all
reflexive binary relations on \emph{any} set $I$ is mirrored in a
system of subsets of a corresponding group $G$, in a way that
respects certain operations on these two families.
In Section~\ref{SS.u_d} we verify that this implies
the asserted properties of the example of Section~\ref{S.u_d} above.

\section{Down to business: definitions}\label{S.defs}

In the remainder of this note, we shall distinguish
between an algebra $A$ (in the sense of universal algebra)
and its underlying set, which we denote $|A|$.
We continue to write
permutations to the right of the elements they are applied to,
and to compose them accordingly.
We shall similarly compose binary relations like functions
written to the right of their arguments.

For $I$ a set, logicians regard the set $|R(I)|$ of all binary
relations on $I$, that is, all subsets $\rho\subseteq I\times I$,
as the underlying set of
an algebra $R(I)$ with three binary operations, two unary
operations, and three zeroary operations (constants)
\cite{mck}~\cite{wiki}.
(J\'{o}nsson \cite{BJ} calls the clone of operations that these
generate the ``classical clone'', and discusses further operations;
more on that in Section~\ref{SS.BJ}.)
The three binary operations are setwise intersection and union,
which we shall write $\cap$ and $\cup$ (also so denoted
in~\cite{BJ}, but written $\cdot$ and $+$ in~\cite{mck},
and $\wedge$ and $\vee$ in~\cite{wiki}), and composition,
which we shall write~$\circ$:
\begin{equation}\begin{minipage}[c]{23pc}\label{d.comp}
$\rho\circ\sigma\ =\ \{(i,k)\in I\times I:(\exists~j\in I)
\,\ (i,j)\in\rho,\ (j,k)\in\sigma\,\}$.
\end{minipage}\end{equation}
(This is written $\rho\,|\,\sigma$ in~\cite{BJ} and~\cite{mck}, and
$\rho\cdot \sigma$ in~\cite{wiki}.)
The two unary operations are complementation, which we may
write $^c \rho$ (but which will not come into our results;
it is denoted ``$-$'' in~\cite{BJ}, \cite{mck} and~\cite{wiki}),
and the \emph{converse} operation,
\begin{equation}\begin{minipage}[c]{23pc}\label{d.conv}
$\rho^{-1}\ =\ \{(i,j)\in I\times I:(j,i)\in\rho\,\}$
\end{minipage}\end{equation}
(denoted $\rho^{\smile}$ in~\cite{BJ}, \cite{mck} and~\cite{wiki}).
The three zeroary operations are the empty relation $0$
(set-theoretically, $\emptyset$), the total relation $1$
(set-theoretically, $I\times I$),
and the \emph{diagonal} or \emph{identity} relation,
\begin{equation}\begin{minipage}[c]{23pc}\label{d.Delta}
$\Delta\ =\ \{(i,i):i\in I\}$
\end{minipage}\end{equation}
(denoted $E$ in~\cite{BJ}, $I$ in~\cite{mck},
$\mathbf{I}$ in~\cite{wiki}).

Note that $\cap$, $\cup$, $^c$, $0$, and $1$ are the Boolean
operations on subsets of $I\times I$; so $R(I)$ is
a Boolean algebra with three additional operations,
$\circ$, $^{-1}$, and $\Delta$.

As indicated in the preceding section, we are only
interested here in \emph{reflexive} binary relations, i.e.,
relations containing $\Delta$, so the operations
$0$ and $^c$ will not concern us.
Moreover, the construction we are interested in does
not respect unions of relations: given sets $X_i$ $(i\in I)$,
the set of permutations of $X=\bigcup_{i\in I} X_i$
that allow movement of (finitely many) elements
from $X_i$ to $X_j$ whenever $(i,j)\in \rho\cup\sigma$
may be larger than the union of the set
that allows such movement only when $(i,j)\in\rho$, and
the set that allows such movement only when $(i,j)\in\sigma$.
Nor does our construction respect the diagonal relation $\Delta$ on $I$,
since $\Delta$ induces, not the trivial (``identity'') subgroup of $G$,
but the subgroup of
those permutations that carry each $X_i$ into itself.

Hence we will work with the following more restricted structures.

\begin{definition}\label{D.r(I)}
For $I$ a set, the \emph{restricted} relation algebra on $I$
will mean the algebra
\begin{equation}\begin{minipage}[c]{23pc}\label{d.r(I)}
$r(I)\ =\ (|r(I)|,\ \cap,\ \circ,\ {}^{-1},\ 1)$,
\end{minipage}\end{equation}
where $|r(I)|$ is the set of all reflexive binary relations
on $I$, and the four operations are defined as in the
above discussion of the full relation algebra $R(I)$.
\end{definition}

We now turn to groups.
For $G$ a group, the set of all subsets of $|G|$,
furnished with certain operations, is called in~\cite{mck} the
\emph{complex algebra} of $G$, apparently based on terminology
in which a subset of a group was called a complex.
Nowadays, the concepts of chain and cochain complex
are the immediate associations of that word in algebra,
so it seems best to introduce a different terminology.
We again use the word ``restricted'' to signal the limitation in
the sets and operations we allow.

\begin{definition}\label{D.p(G)}
For $G$ a group \textup{(}with identity element
written $e$\textup{)}, the \emph{restricted subset algebra}
of $G$ will mean the algebra
\begin{equation}\begin{minipage}[c]{23pc}\label{d.p(G)}
$p(G)\ =\ \{\,|p(G)|,\ \cap,\ \cdot,\ ^{-1},\ |G|\,\}$,
\end{minipage}\end{equation}
where
\begin{equation}\begin{minipage}[c]{23pc}\label{d.p|G|}
$|p(G)|\ =\ \{S\subseteq |G|:e\in X\}$,
\end{minipage}\end{equation}
and where for $S$, $T\in|p(G)|$ we define
\begin{equation}\begin{minipage}[c]{23pc}\label{d.cir}
$S\cdot T\ =\ \{gh:g\in S,\ h\in T\}$,
\end{minipage}\end{equation}
and
\begin{equation}\begin{minipage}[c]{23pc}\label{d.-1}
$S^{-1}\ =\ \{g^{-1}:g\in S\}$,
\end{minipage}\end{equation}
while letting
$\cap$ and $|G|$ in~\eqref{d.p(G)} have their obvious meanings
\textup{(}intersection of subsets, and the improper subset
of~$|G|$\textup{)}.
\end{definition}

We can now formulate
precisely the construction sketched in the preceding section.

\begin{definition}\label{D.G_(X)}
Let $(X_i)_{i\in I}$ be a family of pairwise disjoint infinite
sets, with union $X=\bigcup_I X_i$, and for each $x\in X$,
let $\iota(x)\in I$ be the index such that $x\in X_{\iota(x)}$.
We shall denote by $G_{(X_i)}$ the group of
those permutations $g$ of $X$ having the property
that $\iota(xg)=\iota(x)$ for all but finitely many $x\in X$.

For each $\rho\in|r(I)|$ \textup{(}see
Definition~\ref{D.r(I)}\textup{)}, we shall write
\begin{equation}\begin{minipage}[c]{23pc}\label{d.S_rho}
$S_\rho\ =
\ \{g\in|G_{(X_i)}|:(\forall\,x\in X)\ (\iota(x),\iota(xg))\in\rho\}$,
\end{minipage}\end{equation}
in other words, the set of those permutations of $X$
that change the home set $X_i$ of only finitely many elements
of $X$, and that can only move an
element from $X_i$ to $X_j$ if $(i,j)\in\rho$.
\end{definition}

\section{Embeddings}\label{S.main}

The first part of the next theorem is our generalization of the
asserted properties of the construction of Section~\ref{S.u_d};
we shall recover that particular case from it in the next section.
The second part of the theorem is a case of a known result
in the opposite direction.

\begin{theorem}\label{T.main}
Let $I$ be a set.
Then there exists a group $G$ and an embedding
of the restricted relation algebra $r(I)$ in the restricted
subset algebra $p(G)$.
Namely, starting with any $I$-tuple of pairwise disjoint
infinite sets $(X_i)_{i\in I}$, if we take for $G$ the group
$G_{(X_i)}$ of Definition~\ref{D.G_(X)}, then the map
$|r(I)|\to |p(G)|$ defined by
\begin{equation}\begin{minipage}[c]{23pc}\label{d.rI_to_sG}
$\rho\ \mapsto\ S_\rho\ \subseteq\ |G|$\quad
\textup{(}see~\eqref{d.S_rho}\textup{)}
\end{minipage}\end{equation}
is such an embedding.

Inversely, given any group $G$, there exists a set $I$
and an embedding of $p(G)$ in $r(I)$.
Namely, for $I=|G|$, such an embedding is given by
\begin{equation}\begin{minipage}[c]{23pc}\label{d.sG_to_rI}
$S\ \mapsto\ \{(g,gs):g\in|G|,\ s\in S\}$,
\end{minipage}\end{equation}
equivalently, $\{(g,h)\in |G|\times|G|:g^{-1} h\in S\}$.

Thus, if $A$ is any algebra with four operations, two binary, one unary
and one zeroary, then $A$ is embeddable in the restricted
relation algebra $r(I)$ of some set $I$ if and only if it is
embeddable in the restricted subset algebra $p(G)$ of some group $G$.
\end{theorem}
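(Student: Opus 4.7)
The plan is to establish the two embedding claims and then combine them for the final biconditional.

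For the forward direction, I would first check that the map $\rho\mapsto S_\rho$ is well-defined (each $S_\rho$ contains the identity since $\rho$ is reflexive) and injective. Given $(i,j)\in\rho\setminus\sigma$, I would pick any $x\in X_i$ and use the Hilbert's Hotel construction from Section~\ref{S.u_d} to produce a $g\in G_{(X_i)}$ whose only inter-set movement is $x\mapsto y\in X_j$ (shifting countably many points inside $X_i$ and inside $X_j$ to restore bijectivity); this $g$ lies in $S_\rho\setminus S_\sigma$. Preservation of $1$, $\cap$, and $^{-1}$ is straightforward from the definition~\eqref{d.S_rho}: the top relation $I\times I$ yields all of $|G|$; membership in $S_{\rho\cap\sigma}$ is conjunctive; and the substitution $y=xg$ identifies $S_{\rho^{-1}}$ with $S_\rho^{-1}$. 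Preservation of composition, $S_\rho\cdot S_\sigma=S_{\rho\circ\sigma}$, splits into two inclusions. The inclusion $\subseteq$ is a direct chase: if $g\in S_\rho$ and $h\in S_\sigma$, then for each $x$ the intermediate index $\iota(xg)$ witnesses $(\iota(x),\iota(xgh))\in\rho\circ\sigma$.

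The main obstacle is the reverse inclusion $S_{\rho\circ\sigma}\subseteq S_\rho\cdot S_\sigma$, which requires factoring a given permutation. For $g\in S_{\rho\circ\sigma}$, the set $F=\{x\in X:\iota(xg)\ne\iota(x)\}$ is finite, and for each $x\in F$ one chooses $j_x\in I$ with $(\iota(x),j_x)\in\rho$ and $(j_x,\iota(xg))\in\sigma$. Picking distinct ``waystation'' points $y_x\in X_{j_x}$ (disjoint from $F$ and $Fg$), one defines $g_1$ to send $x\mapsto y_x$ on $F$ and $g_2$ to send $y_x\mapsto xg$; these partial maps are extended to permutations in $G_{(X_i)}$ by using Hilbert's-Hotel shifts inside each $X_i$ to absorb the finite rearrangements. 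By construction, $g_1\in S_\rho$, $g_2\in S_\sigma$, $g_1g_2=g$ on $F\cup Fg$, and — after a possible final adjustment of $g_2$ by a permutation preserving each $X_i$ — $g_1g_2=g$ everywhere. This bookkeeping is the only delicate point; the rest is routine.

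For the second embedding, write $\rho_S=\{(g,h)\in|G|\times|G|:g^{-1}h\in S\}$. Reflexivity holds because $e\in S$, so $\rho_S\in|r(|G|)|$. The map is injective: if $s\in S\setminus T$ then $(e,s)$ separates $\rho_S$ from $\rho_T$. Each operation is preserved by a one-line calculation in $|G|$: $(g,h)\in\rho_{S\cap T}$ iff $g^{-1}h\in S\cap T$; $(g,h)\in\rho_S^{-1}$ iff $h^{-1}g\in S$ iff $(g,h)\in\rho_{S^{-1}}$; $\rho_{|G|}$ is the top relation; and for composition, $(g,k)\in\rho_S\circ\rho_T$ iff there exists $h$ with $g^{-1}h\in S$ and $h^{-1}k\in T$, which is equivalent to $g^{-1}k\in S\cdot T$, giving $\rho_{ST}=\rho_S\circ\rho_T$.

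The final clause is then immediate: any algebra $A$ with the prescribed signature that embeds in some $r(I)$ also embeds in $p(G_{(X_i)})$ by composing with the embedding of the first part, and any $A$ that embeds in some $p(G)$ embeds in $r(|G|)$ by composing with the embedding of the second part.
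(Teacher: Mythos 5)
Your proposal is correct and follows essentially the same route as the paper's proof: the same injectivity witness, the same direct chase for $S_\rho\cdot S_\sigma\subseteq S_{\rho\circ\sigma}$, and the same Hilbert's-Hotel factoring through waystation points for the reverse inclusion. The only cosmetic difference is in that factoring step, where the paper simply sets the second factor equal to $g^{-1}f$ (so no ``final adjustment'' is needed) and then checks it lies in $S_\sigma$, which streamlines the bookkeeping you flag as delicate.
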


\begin{proof}
It is immediate from the definition~\eqref{d.S_rho}
that the map~\eqref{d.rI_to_sG} respects intersections, takes
converse relations to inverse subsets, and takes the improper
relation on $I$ to the whole group $G_{(X_i)}$; so what we need to
verify is that it respects composition and is one-to-one.
It is straightforward that
$S_\rho\cdot S_\sigma\subseteq S_{\rho\circ\sigma}$.
Indeed, writing an element of $S_\rho\cdot S_\sigma$
as $gh$ with $g\in S_\rho$ and $h\in S_\sigma$, consider
any $x\in X$.
By~\eqref{d.S_rho} $(\iota(x),\iota(xg))\in\rho$ and
$(\iota(xg),\iota(xgh))\in\sigma$;
hence $(\iota(x),\iota(xgh))\in\rho\circ\sigma$.
Since this holds for all $x\in X$, we have
$gh\in S_{\rho\circ\sigma}$, giving the asserted inclusion.

To get the reverse inclusion,
consider any $f\in S_{\rho\circ\sigma}$, and let us try to write
it as the product of a member of $S_\rho$ and a member of $S_\sigma$.
To this end, let $x_1,\dots,x_n$ be the finitely many elements
$x\in X$ such that $\iota(xf)\neq\iota(x)$
(see Definition~\ref{D.G_(X)}).
Since $f\in S_{\rho\circ\sigma}$, each pair
$(\iota(x_m),\,\iota(x_m f))$
lies in $\rho\circ\sigma$, so we can
find $j_1,\dots,j_n\in I$ such that
\begin{equation}\begin{minipage}[c]{23pc}\label{d.via_jm}
$(\iota(x_m),\,j_m)\in\rho,\quad (j_m,\,\iota(x_m f))\in\sigma
\quad (m=1,\dots,n)$.
\end{minipage}\end{equation}
For $m=1,\dots,n$, let us choose elements $y_m\in X_{j_m}$
so that $y_1,\dots,y_n$ are distinct
(which is possible because the $X_i$ are all infinite).
Let us now choose any $g\in|G|$ such that
\begin{equation}\begin{minipage}[c]{23pc}\label{d.xmg}
$x_m\,g\ =\ y_m$ for $m=1,\dots,n$,
while $g$ moves no other elements of $X$ out of the sets $X_i$ in
which they started.
\end{minipage}\end{equation}
This is again possible because the $X_i$ are infinite, so that
if the number of elements to be moved into and out of a
given $X_i$ are different, we can absorb the disparity by moving
infinitely many elements within $X_i$.
By~\eqref{d.xmg} and
the first relation of~\eqref{d.via_jm}, $g\in S_\rho$.
Letting $h=g^{-1} f$, we see that no element not among the $y_m$
is moved by $h$ from one $X_i$ to another, while each $y_m$
is moved from $X_{j_m}$ to $X_{\iota(x_m f)}$; so
by the second condition of~\eqref{d.via_jm}, $h\in S_\sigma$.
Since $f=gh$, this shows that the arbitrary element
$f\in S_{\rho\circ\sigma}$ lies in $S_\rho\cdot S_\sigma$,
completing the proof that~\eqref{d.rI_to_sG} respects composition.

To show that~\eqref{d.rI_to_sG} is one-to-one,
suppose $\rho$ and $\sigma$ are distinct elements of $|r(I)|$.
Assume without loss of generality that $(i,j)$
belongs to $\rho$ but not to $\sigma$.
Then $i\neq j$, and
we can construct a permutation $g$ of $X$ which moves one
element of $X_i$ into $X_j$, and keeps all other elements
of $X$ in their home sets.
Thus $g$ belongs to $S_\rho$ but not to $S_\sigma$,
showing that these are distinct, and completing the proof of
the first assertion of the theorem.

That the construction~\eqref{d.sG_to_rI} has the properties
stated in the second assertion of the theorem is immediate,
as is the final consequence of this pair of assertions.
\end{proof}

Remarks:  The homomorphism~\eqref{d.sG_to_rI} is a standard way
of embedding the (unrestricted) ``complex algebra'' of a group in a
relation algebra (also in the unrestricted sense).
But the behavior of the construction~\eqref{d.rI_to_sG} proved above
is quite different from the situation
for unrestricted relation algebras:
R.\,McKenzie~\cite{mck} shows, roughly, that the class of subalgebras
of unrestricted relation algebras on sets that are
embeddable (via \emph{any} map) in subset algebras
of groups is not determined by any finite set of first-order axioms.

The fact that the homomorphisms~\eqref{d.rI_to_sG}
and~\eqref{d.sG_to_rI} are one-to-one and respect intersections
shows that they are embeddings of posets.
It also is not hard to see that in addition to the finitary
operations of Theorem~\ref{T.main}, they respect
unions and intersections of infinite chains.

\section{Applications}\label{S.appl}

\subsection{Back to where we began.}\label{SS.u_d}
Let us recover from Theorem~\ref{T.main} the properties asserted
in Section~\ref{S.u_d} for the group and monoid described there.
We will need the following observation.

\begin{lemma}\label{L.gp,mnd}
For $(X_i)_{i\in I}$ as in Definition~\ref{D.G_(X)},
and any $\rho\in|r(I)|$, the set $S_\rho$ is a submonoid
of $G_{(X_i)}$ if and only if $\rho$ is a preorder, while it is a
subgroup if and only if $\rho$ is an equivalence
relation.
\end{lemma}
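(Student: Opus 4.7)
The plan is to deduce this lemma directly from Theorem~\ref{T.main}, using that the map $\rho\mapsto S_\rho$ is an injection that respects $\cap$, $\circ$, and ${}^{-1}$. Since it respects intersection and is one-to-one, it is order-preserving in both directions: $\rho\subseteq\sigma$ iff $S_\rho\subseteq S_\sigma$ (via $\rho\subseteq\sigma\iff\rho\cap\sigma=\rho$). I would first note that, because $\rho$ is reflexive as an element of $|r(I)|$, the identity permutation $e$ automatically satisfies $(\iota(x),\iota(xe))=(\iota(x),\iota(x))\in\rho$, so $e\in S_\rho$ and the only remaining questions are closure under multiplication and inversion.

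For the submonoid claim, closure under multiplication means $S_\rho\cdot S_\rho\subseteq S_\rho$. By Theorem~\ref{T.main}, $S_\rho\cdot S_\rho=S_{\rho\circ\rho}$, so the bidirectional order-preservation of the embedding turns this into $\rho\circ\rho\subseteq\rho$. Reflexivity of $\rho$ always gives the reverse inclusion $\rho\subseteq\rho\circ\rho$ for free, so the condition is equivalent to $\rho\circ\rho=\rho$, i.e., transitivity; combined with the ambient reflexivity this is exactly the definition of a preorder. Conversely, any preorder obviously makes $S_\rho$ multiplicatively closed.

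For the subgroup claim, we additionally need $S_\rho^{-1}\subseteq S_\rho$. Theorem~\ref{T.main} gives $S_\rho^{-1}=S_{\rho^{-1}}$, so this translates to $\rho^{-1}\subseteq\rho$; taking inverses on both sides yields the opposite inclusion as well, so the condition is symmetry of $\rho$. Hence $S_\rho$ is a subgroup iff $\rho$ is reflexive, transitive, and symmetric, i.e., an equivalence relation; and conversely, if $\rho$ is an equivalence relation then $\rho\circ\rho=\rho$ and $\rho^{-1}=\rho$ make $S_\rho$ closed under both operations.

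The main obstacle, to the extent there is one, is only to remember to invoke \emph{both} directions of the order-preservation of the embedding; this is immediate from its respecting $\cap$ together with injectivity, so no further calculation inside $G_{(X_i)}$ is required beyond what Theorem~\ref{T.main} has already supplied.
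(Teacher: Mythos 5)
Your proposal is correct and follows essentially the same route as the paper: both deduce the lemma from Theorem~\ref{T.main} together with the observation that $\rho\mapsto S_\rho$ is an order-embedding (injective and $\cap$-preserving), translating $S_\rho\cdot S_\rho\subseteq S_\rho$ into transitivity and $S_\rho^{-1}=S_\rho$ into symmetry. The only difference is that you spell out explicitly why the embedding reflects inclusions, which the paper handles by citing its earlier remark that the maps are embeddings of posets.
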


\begin{proof}
Since any $\rho\in|r(I)|$ is reflexive, $\rho$ will be a preorder
if and only if it is transitive, i.e., satisfies
$\rho\circ\rho\subseteq\rho$, which by Theorem~\ref{T.main}
is equivalent to the condition that
$S_\rho\cdot S_\rho \subseteq S_\rho$.
(Recall the remark at the end of the preceding section, that
the constructions of that theorem are embeddings of posets.)
Since $S_\rho$ contains the identity element
of $G_{(X_i)}$, the above is
precisely the condition for $S_\rho$ to be a submonoid thereof.

A preorder on a set is an equivalence relation if and only it
is symmetric, i.e., if and only if
$\rho=\rho^{-1}$, which by Theorem~\ref{T.main}
is equivalent to $S_\rho=S_\rho^{-1}$, which says that the monoid
$S_\rho$ is a subgroup of $G_{(X_i)}$.
\end{proof}

In particular, for $I=\{1,\dots,n\}$ with the partial
ordering $\preccurlyeq$ described at the start of Section~\ref{S.abstr},
$S_\preccurlyeq$ is a submonoid $M$ of $G=G_{(X_i)}$.
If we form composites
$\preccurlyeq\circ\preccurlyeq^{-1}\circ\preccurlyeq\circ\dots$,
it is easy to verify that the length-$n$ composite is the indiscrete
relation $I\times I$, but that no initial subcomposite is.
(Specifically, one sees by induction
that in building up that composite,
we do not get the pair $(1,m)$ until we have $m$ factors; so
in particular, we don't get $(1,n)$ till we have all $n$ factors.)
Hence, as claimed, the $n$-fold product $M\cdot M^{-1}\cdot\dots$
equals $G$, but no initial subproduct does.

\subsection{Does the left hand know what the right hand is doing?}\label{SS.u_d_vs_d_u}
Suppose we start with an arbitrary finite partially
ordered set $(I,\preccurlyeq)$ and a family $(X_i)_{i\in I}$
of pairwise disjoint infinite sets.
Because $I$ is finite, the increasing chain of binary
relations on $I$,
\begin{equation}\begin{minipage}[c]{23pc}\label{d.prec...}
$\preccurlyeq,\quad\preccurlyeq\circ\preccurlyeq^{-1},\quad
\preccurlyeq\circ\preccurlyeq^{-1}\circ\preccurlyeq,\quad\dots$
\end{minipage}\end{equation}
will stabilize after finitely many steps;
hence so will the chain of subsets
$S_\preccurlyeq\cdot S_\preccurlyeq^{-1}\cdot\dots$ of $G=G_{(X_i)}$.
Clearly, the eventual value of the
latter chain will be the subgroup generated
by $S_\preccurlyeq$ (corresponding to the equivalence relation
on $I$ generated by $\preccurlyeq$).

What if, instead of starting our products with $S_\preccurlyeq$,
we begin with $S_\preccurlyeq^{-1}$?
The chain of subsets
$S_\preccurlyeq^{-1}\cdot S_\preccurlyeq\cdot\dots$ that we get
will necessarily stabilize at the same group; but will the two
chains achieve grouphood after the same numbers of steps?

If one of them achieves grouphood with an odd number of
factors, then the other will do so at or before the same step; for the
odd-length products have the forms
$S_\preccurlyeq\cdot S_\preccurlyeq^{-1}\cdot\,\ldots
\,\cdot S_\preccurlyeq$ and
$S_\preccurlyeq^{-1}\cdot S_\preccurlyeq\cdot\,\ldots
\,\cdot S_\preccurlyeq^{-1}$,
which are inverses to one another; so if one is a group,
the other is the same group.
However, it is possible for one of the products,
$S_\preccurlyeq\cdot S_\preccurlyeq^{-1}\cdot\,\ldots$
or $S_\preccurlyeq^{-1}\cdot S_\preccurlyeq\cdot\,\ldots$,
to achieve grouphood at an even length $2m$,
while the other does not do so till it
reaches length $2m+1$ (at which stage it indeed gives the
same group, since its \emph{last} $2m$ factors form the product
we have assumed is a group).
In fact, for the poset of~\eqref{d.u_d} with
$n=2m+1$, we find that the $n{-}1$-fold product beginning
with $S_\preccurlyeq^{-1}$ equals $G$, while we have seen that
the product starting with
$S_\preccurlyeq$ needs $n$ steps to get to $G$.

There are also partially ordered sets for which the two sorts of
composites, $S_\preccurlyeq\cdot S_\preccurlyeq^{-1}\cdot\dots$ and
$S_\preccurlyeq^{-1}\cdot S_\preccurlyeq\cdot\dots$, reach grouphood at
the same even or odd step.
For even $n$, this is true of the construction of Section~\ref{S.u_d},
as may be deduced from the fact that in that case,
$(I,\preccurlyeq)$ and $(I,\preccurlyeq^{-1})$ are isomorphic posets.
For odd $n$, we can take $(I,\preccurlyeq)$ to be
the union of a copy of the $n$-element
poset constructed as in Section~\ref{S.u_d}, and a copy of its
opposite, with the elements of one copy
incomparable with those of the other.
A construction that works
uniformly for even and odd $n$ takes $I=|\Z/2(n-1)\Z|$,
again letting $\preccurlyeq$ be the partial order under which each even
element of $I$ is $\preccurlyeq$ the two adjacent odd elements.
(Thus, $(I,\preccurlyeq)$ is a ``crown'' with $2(n-1)$ vertices.)

\subsection{Another group-from-monoids example.}\label{SS.135}
Let $I=\{1,2,3,4,5,6\}$, and consider the following
three partial orderings on~$I$.
\begin{equation}\begin{minipage}[c]{23pc}\label{d.135}
\hspace*{-.17em}$\preccurlyeq_1$, under which
$1\preccurlyeq_1 2\preccurlyeq_1 3\preccurlyeq_1 4
\preccurlyeq_1 5$, while $6$ is incomparable with all of these,\\[.4em]
$\preccurlyeq_3$, under which
$3\preccurlyeq_3 4 \preccurlyeq_3 5
\preccurlyeq_3 6\preccurlyeq_3 1$,
while $2$ is incomparable with all of these,\\[.4em]
$\preccurlyeq_5$, under which
$5 \preccurlyeq_5 6\preccurlyeq_5
1\preccurlyeq_5 2 \preccurlyeq_5 3$,
while $4$ is incomparable with all of these.
\end{minipage}\end{equation}
I claim that
\begin{equation}\begin{minipage}[c]{23pc}\label{d.135=1}
$(\preccurlyeq_5\circ\preccurlyeq_3\circ\preccurlyeq_1)\ =
\ (\preccurlyeq_3\circ\preccurlyeq_1\circ\preccurlyeq_5)\ =
\ (\preccurlyeq_1\circ\preccurlyeq_5\circ\preccurlyeq_3)\ =\ 1$,\\[.4em]
but none of
$(\preccurlyeq_1\circ\preccurlyeq_3\circ\preccurlyeq_5)$,
$(\preccurlyeq_5\circ\preccurlyeq_1\circ\preccurlyeq_3)$, or
$(\preccurlyeq_3\circ\preccurlyeq_5\circ\preccurlyeq_1)$ equals~$1$.
\end{minipage}\end{equation}
(Above and in the next paragraph, parentheses are put around
relation-symbols when set-theoretic relation symbols are
applied to them.)
From this, it will follow by Theorem~\ref{T.main}
that the submonoids $A= S_{\preccurlyeq_5}$,
$B=S_{\preccurlyeq_3}$ and $C=S_{\preccurlyeq_1}$ of $G$ satisfy
\begin{equation}\begin{minipage}[c]{23pc}\label{d.ABC}
$A\cdot B\cdot C\ =\ B\cdot C\cdot A\ =\ C\cdot A\cdot B\ =\ G$,\\[.2em]
but none of $ A\cdot C\cdot B$ or $B\cdot A\cdot C$ or
$C\cdot B\cdot A$ equals $G$.
\end{minipage}\end{equation}

To establish~\eqref{d.135=1}, let us first verify that
$(\preccurlyeq_5\circ\preccurlyeq_3\circ\preccurlyeq_1)=1$.
Given any $i\in I$, we must show that
$(i,j)\in(\preccurlyeq_5\nolinebreak\circ
\preccurlyeq_3\circ\preccurlyeq_1)$ for all $j\in I$.
We see from the last line of~\eqref{d.135} that if $i\neq 4$, then
$(i,3)\in(\preccurlyeq_5)$, while if $i=4$ we trivially have
$(i,4)\in(\preccurlyeq_5)$.
From these two relations and the second
line of~\eqref{d.135}, we can see that whatever $i$ is,
$\preccurlyeq_5\circ\preccurlyeq_3$ contains $(i,4)$, $(i,5)$,
$(i,6)$ and $(i,1)$.
Composing with $\preccurlyeq_1$, and using the fact that
we have already gotten $(i,1)$, we see from the first line
of~\eqref{d.135} that we get $(i,2)$ and $(i,3)$ as well.
So $\preccurlyeq_5\circ\preccurlyeq_3\circ\preccurlyeq_1$
indeed contains all pairs $(i,j)$ $(i,j\in I)$.
The same is true of the other two composites listed in~\eqref{d.135=1},
by symmetry, i.e., by the fact that the $2$-step
cyclic permutation of the elements of $I$ cyclically permutes the
preorders~\eqref{d.135}.
Thus we have established the first line of~\eqref{d.135=1}.

On the other hand, it is not hard to check that
$\preccurlyeq_1\circ\preccurlyeq_3\circ\preccurlyeq_5$
does not contain $(5,4)$, $(6,4)$ or $(6,5)$,
hence is not the improper relation $1$;
and again, by symmetry, this implies the
other cases of the second line of~\eqref{d.135=1}.
As noted, these results give~\eqref{d.ABC}.

\section{Some variant constructions}\label{S.variants}

\subsection{Generalizing the finite/infinite contrast.}\label{SS.small/big}
There are some easy variants of
Definitions~\ref{D.r(I)}-\ref{D.G_(X)} for which
Theorem~\ref{T.main} goes over without difficulty.

On the one hand, one can everywhere replace ``finite'' and ``infinite''
by ``of cardinality $<\kappa$'' and ``of cardinality $\geq\kappa$''
for any fixed infinite cardinal $\kappa$.

If $I$ is infinite, one can also weaken the condition that only
finitely many (or in the above generalization, fewer than $\kappa$)
elements of $X$ move from one $X_i$ to another, to merely say
that each $X_i$ receives only finitely many (respectively, fewer
than $\kappa$) elements from outside
itself, and sends only finitely many
(respectively, fewer than $\kappa$) out of itself.

One can also replace infinite sets $X_i$, their finite subsets, and
set maps among them by, say, measure
spaces of positive measure, subsets of measure zero, and
measure-preserving permutations of $X$,
if one uses measure spaces for which one has an appropriate
measure-theoretic version of Hilbert's Hotel.
In particular, the standard measure on the real unit interval $[0,1]$
has the property that for every
$Z\subseteq [0,1]$ of measure zero, there is a measure-preserving
bijection between $[0,1]-Z$ and $[0,1]$
(Charles Pugh, personal communication).
From this it easily follows that $X=\bigcup X_i$ has the desired
properties if each $X_i$ is a copy of $[0,1]$, and $G$ consists
of the measure-preserving permutations of $X$ under which
the set of points that move between the sets $X_i$ has measure zero.

\subsection{A finitely generated example.}\label{SS.fg}
Dawid Kielak (personal communication)
has asked whether one can get
a $G$ and an $M$ answering Kuczma's original
question, such that $M$ is finitely generated as a monoid.
I outline below how to modify the construction of~\S\ref{S.u_d}
to get such an example.

Given a finite poset $(I,\preccurlyeq)$, let $X=\bigcup_{i\in I} X_i$,
where each $X_i$ consists of elements $x_{i,k}$ $(k\in\Z)$, and
elements with distinct subscript-pairs are understood to be distinct.
Let $M$ be the monoid of permutations of $X$ generated by the
following elements:
\begin{equation}\begin{minipage}[c]{23pc}\label{d.a_i}
$a_i$, defined for each $i\in I$, which acts by
$x_{i,k}\,a_i = x_{i,k+1}$, and fixes all $x_{i',k}$ with $i'\neq i$,
\end{minipage}\end{equation}
\begin{equation}\begin{minipage}[c]{23pc}\label{d.a_i-}
$a_i^{-1}$, defined for each $i\in I$, which, of course, acts
by $x_{i,k}\,a_i^{-1} = x_{i,k-1}$, and again fixes
all $x_{i',k}$ with $i'\neq i$,
\end{minipage}\end{equation}
\begin{equation}\begin{minipage}[c]{23pc}\label{d.b_i}
$b_i$, defined for each $i\in I$, which interchanges
$x_{i,0}$ and $x_{i,1}$, and fixes all other elements of $X$,
\end{minipage}\end{equation}
\begin{equation}\begin{minipage}[c]{23pc}\label{d.c_i,i'}
$c_{i,i'}$ defined for all $i\neq i'$
such that $i\preccurlyeq i'$, which acts by\\[.1em]
\hspace*{1in}$x_{i,0}\ c_{i,i'} \ = \ x_{i',0}$,\\[.1em]
\hspace*{1in}$x_{i,k}\ c_{i,i'} \ = \ x_{i,k-1}$, for $k>0$,\\[.1em]
\hspace*{1in}$x_{i',k}\ c_{i,i'} \,= \ x_{i',k+1}$,
for $k\geq 0$,\\[.1em]
and which fixes all elements of $X$ other than those listed above
\textup{(}including all elements $x_{i,k}$ and $x_{i',k}$
with $k<0$\textup{)}.
\end{minipage}\end{equation}

Note that the generators in~\eqref{d.a_i},
\eqref{d.a_i-} and~\eqref{d.b_i}
all have inverses in $M$, so the monoid they generate is a group $H$.
Clearly, members of $H$ carry each $X_i$ into itself;
it is not hard to verify that $H$
\emph{includes} all permutations with this property
which move only finitely many elements of $X$
(these form the subgroup generated by the conjugates
of the $b_i$ by powers of the $a_i$),
and that the general element of $H$
acts on each $X_i$ as such a permutation, followed by
a translation $a_i^{t_i}$ $(t_i\in\Z)$.

It is also not hard to verify that when we bring in the $c_{i,i'}$,
the effect is that whenever $i\prec i'$, an element $f\in M$ can
move any finite family of elements of $X_i$ to arbitrary
positions in $X_{i'}$.
(The idea is to move the elements of $X_i$ that are to be
transferred to $X_{i'}$ one by one to the position $x_{i,0}$,
then apply $c_{i,i'}$; and then move their images, which are
at first $x_{i',0}$, to the desired positions in $X_{i'}$.)
The actions of $f$ on ``most'' elements of each $X_i$ will still be
by translations of the second subscript; but a consequence of the
difference between the behavior the $c_{i,i'}$ on elements with
positive and with negative second subscripts is that for each $i$,
there will, in general, be two different translations, one affecting
elements $x_{i,k}$ with $k$ large (above some constant),
the other affecting such elements with $k$ small (below some constant).
Precisely,
\begin{equation}\begin{minipage}[c]{23pc}\label{d.M}
A permutation $f$ of $X$ will belong to $M$ if and only if
(i)~for all $i,i'\in I$, $f$ carries no elements of $X_i$
into $X_{i'}$ unless $i\preccurlyeq i'$, and
(ii)~for each $i\in I$, there exist an integer $s_i$ such that
for all sufficiently large $k$, $x_{i,k}f = x_{i,k+s_i}$, and
an integer $t_i$ such that
for all sufficiently small $k$, $x_{i,k}f = x_{i,k+t_i}$.
\end{minipage}\end{equation}
Here $s_i-t_i$ will be the number of elements that $f$ moves into
$X_i$, minus the number it moves out of $X_i$.
It is not hard to deduce from~\eqref{d.M}, by the same method
used in the proof of Theorem~\ref{T.main} to show
that $S_\rho\,S_\sigma = S_{\rho\circ\sigma}$,
that membership in a product
$M\,M^{-1}M\,\dots\,M^{(-1)^{m-1}}$
is described by the condition like~\eqref{d.M},
except that in~(i), the relation $\preccurlyeq$ is replaced by
$\preccurlyeq\circ\preccurlyeq^{-1}\circ\preccurlyeq\circ
\dots\circ\preccurlyeq^{(-1)^{m-1}}$.
In particular, letting $I$ be the poset of~\eqref{d.u_d},
we see that $M\,M^{-1} M\dots\,M^{(-1)^{m-1}}$ will be
the group $G$ of all permutations that satisfy~(ii)
if and only if $m\geq n$; from which it follows that $M$
has the property asked for by Kuczma.

We remark that if $I$ is any finite set, and for
each $\rho\in |r(I)|$ we let $S_\rho$ be the
set of permutations $f$ of $X$ satisfying the analog of~\eqref{d.M}
with $\rho$ in place of $\preccurlyeq$ in~(i), then this
gives us an embedding of $r(I)$ in $p(G)$, as in Theorem~\ref{T.main}.
But if $\rho$ is not a partial order (or more generally, a preorder),
I do not see any property of $S_\rho$ generalizing
the striking fact that when it is a preorder,
$M$ is finitely generated as a monoid.
(The closest I can see is the observation that for all $\rho\in|r(I)|$,
the submonoid of $G$ generated by $S_\rho$ is finitely generated.)

\subsection{Dropping the finiteness restriction.}\label{SS.drop_fin}
We might enlarge, rather than restricting, the class of
the permutations we consider.
Given a finite set $I$ and
a family of disjoint infinite sets $X_i$, say,
for simplicity, all countable,
suppose we let $G$ be the group of \emph{all} permutations of
$X=\bigcup X_i$, without any finiteness restriction on
elements moving among the $X_i$.
Let us again associate
to each binary relation $\rho$ on $I$
the subset of those elements of $G$ that don't
move elements from $X_i$ to $X_j$ unless $(i,j)\in\rho$,
and call this $S_\rho$.

Again, if $\preccurlyeq$ is a preorder, then
$S_\preccurlyeq$ is a monoid; and we can again get examples where the
group generated by $S_\preccurlyeq$ can be expressed as a finite
product of $S_\preccurlyeq$ and $S_\preccurlyeq^{-1}$,
but where the smallest such product is arbitrarily long.
But the monoid $M$ determined by a given pair
$(I,\preccurlyeq)$ under this version of our construction
may in fact require longer products
$M\cdot M^{-1}\cdot\dots$ to get all of $G$ than
the monoid described in Sections~\ref{S.defs}-\ref{S.main} took
to generate the $G$ of that construction.
For instance, if $I=\{1,2\}$ with $1\preccurlyeq 2$, I claim
that a composite
of $S_\preccurlyeq$ and $S_\preccurlyeq^{-1}$ which contains a
permutation that \emph{interchanges} the contents of
the two sets $X_1$ and $X_2$ must have length at least~$3$.
To see that no such permutation belongs to
$S_\preccurlyeq\cdot S_\preccurlyeq^{-1}$,
note that any member of $S_{\preccurlyeq}$
must leave in $X_1$ infinitely
many of the elements that were originally there,
and that none of these is moved out by any member of
$S_{\preccurlyeq}^{-1}$.
But we do get permutations interchanging $X_1$ and $X_2$ in
$S_\preccurlyeq\cdot S_\preccurlyeq^{-1}\cdot S_\preccurlyeq$:
think (roughly speaking)
of first moving half the elements of $X_1$ up into $X_2$
(as in the versions of Hilbert's Hotel with infinitely many
arriving or departing guests), then moving
all the elements that were originally in $X_2$ down into $X_1$,
and finally moving those that stayed in $X_1$ at the
first step up into $X_2$.

In this situation, the class
of sets $S_\rho\subseteq G$ $(\rho\in|r(I)|)$
is not closed under composition; e.g., it is not hard to
see that in the situation just described,
$S_\preccurlyeq\cdot S_\preccurlyeq^{-1}
=S_\preccurlyeq\cdot S_{\preccurlyeq^{-1}}$
does not have the form $S_\rho$ for any $\rho\in|R(I)|$.
I have not examined for
general $I$ the algebra of subsets of $|G|$ generated by
the sets $S_\rho$ $(\rho\in|R(I)|)$
under the operations of $p(G)$.

\subsection{A two-group construction.}\label{SS.G,H}
In the discussion preceding
Definitions \mbox{\ref{D.r(I)}-\ref{D.G_(X)}},
we noted that the construction we were
leading up to would not respect ``identity elements'',
i.e., would not send $\Delta\in|r(I)|$ to $\{e\}\in|p(G)|$; so we
left the zeroary operations $\Delta$ and $\{e\}$ out of the structures
we defined.
However, with a bit of added complication,
we can bring $\Delta$ back in.
Note that in the context of Definition~\ref{D.G_(X)},
$S_\Delta$ is the subgroup $H\subseteq G$ of permutations
of $X$ that carry each $X_i$ to itself; and that every
subset $S_\rho$ is
left and right $H$-invariant (closed under left and right
multiplication by elements of $H$).
Building on this observation, suppose that
for any group $G$ and subgroup $H$, we let
\begin{equation}\begin{minipage}[c]{23pc}\label{d.|p'|}
$|p'(G,H)|\ =\ \{S\subseteq|G|:e\in S,\ S=HSH\}$.
\end{minipage}\end{equation}

This family of sets is closed under the four operations in our
definition of $p(G)$, and also contains the set $|H|$.
We find that if we now define algebras with five operations,
\begin{equation}\begin{minipage}[c]{23pc}\label{d.p'}
$p'(G,H)\ =\ (|p'(G,H)|,\ \cap,\ \cdot,\ ^{-1},\ |H|,\ |G|)$,
\end{minipage}\end{equation}
\begin{equation}\begin{minipage}[c]{23pc}\label{d.r'}
$r'(I)\ =\ (|r(I)|,\ \cap,\ \circ,\ \ ^{-1},\ \Delta,\ 1)$,
\end{minipage}\end{equation}
and for any family of disjoint infinite sets $(X_i)_{i\in I}$, we
supplement Definition~\ref{D.G_(X)} with
\begin{equation}\begin{minipage}[c]{23pc}\label{d.H}
$H_{(X_i)}\ =$ the subgroup of $G_{(X_i)}$ with underlying
set $S_\Delta$,
\end{minipage}\end{equation}
then the map~\eqref{d.rI_to_sG}, i.e.,
$\rho\mapsto S_\rho$, gives an embedding
of $r'(I)$ in $p'(G_{(X_i)},\,H_{(X_i)})$
carrying the constant $\Delta$ to the constant $|H|$,
as well as respecting the other four operations.
Inversely, given a group $G$ and a subgroup $H$, if
we let $I=G/H$, the set of left cosets of $H$ in $G$,
and take as our analog of~\eqref{d.sG_to_rI}
the map $|p'(G,H)|\to |r'(I)|$ given by
\begin{equation}\begin{minipage}[c]{23pc}\label{d.s'G_to_r'I}
$S\ \mapsto\ \{(gH,\,gsH):g\in|G|,\ s\in S\}$,
\end{minipage}\end{equation}
then together these constructions
satisfy the analog of Theorem~\ref{T.main}.

(In a preprint version of this note, I claimed incorrectly
that we could get such a construction using for $I$
the set $H\backslash G/H$ of double cosets of $H$ in $G$.
The analog of~\eqref{d.s'G_to_r'I} would have been
\begin{equation}\begin{minipage}[c]{23pc}\label{d.s''G_to_r''I}
$S\ \mapsto\ \{(HgH,\,HgsH):g\in|G|,\,s\in S\}$.
\end{minipage}\end{equation}
However, this map does not respect intersections.
For instance, if $G$ is the free group on $\{x,y,z\}$ and
$H$ its subgroup generated by $x$, consider the elements
$S=H\cup HyH$ and $T=H\cup Hz^{-1}xzyH$ of $|p'(G,H)|$.
I claim that the images in $I\times I$ under~\eqref{d.s''G_to_r''I}
of both $S$ and $T$ contain the pair $(HzH,\,HzyH)$.
That the image of $S$ does is clear;
that the image of $T$ also does can be seen by
writing $(HzH,\,HzyH)$ as $(HzH,\,Hz\,{\cdot}\,z^{-1}xzyH)$.
However, the image under~\eqref{d.s''G_to_r''I} of
$S\cap T=H$ clearly does not contain $(HzH,\,HzyH)$.)

\subsection{Still more operations.}\label{SS.BJ}
In \cite[Section~1.1]{BJ} B.\,J\'{o}nsson notes that there
are many more natural operations on the binary relations
on a set $I$ than those in the ``classical clone'', the clone
generated by the operations here denoted
$\cap$, $\cup$, $\circ$, $^c$, $^{-1}$, $0$, $\Delta$ and $1$.

For example, given $\rho,\,\sigma\in|R(I)|$, he defines the
left and right \emph{residuals} of $\rho$ with respect to $\sigma$:
\begin{equation}\begin{minipage}[c]{23pc}\label{d.resid}
$\rho/\sigma\ =\ \{(i,j):
(\forall\ k\in I)\ (j,k)\in\sigma\implies(i,k)\in\rho\},
\\[.4em]
\sigma\backslash\,\rho\ =\ \{(i,j):
(\forall\ k\in I)\ (k,i)\in\sigma\implies(k,j)\in\rho\}$.
\end{minipage}\end{equation}
These are the largest relations $\tau_1$ and $\tau_2$ such
that $\tau_1\circ\sigma\subseteq\rho$ and
$\sigma\circ\tau_2\subseteq\rho$.
Unfortunately, these two operations cannot be fitted into our
restricted relation algebras, because they do
not, in general, take reflexive relations to reflexive relations.
Indeed, if $\rho/\sigma$, respectively, $\sigma\backslash\rho$, is
reflexive, this implies, by the characterization of
those operations just mentioned, that
$\Delta\circ\sigma\subseteq\rho$, respectively
$\sigma\circ\Delta\subseteq\rho$, which is not the case
for all reflexive $\rho$ and $\sigma$,
since $\Delta\circ\sigma=\sigma=\sigma\circ\Delta$.
Conceivably, it might be useful to
bring in $/$ and $\backslash$ as \emph{partial} operations,
defined on those pairs $(\rho,\sigma)$ such
that $\sigma\subseteq\rho$; or, roughly equivalently, to look at the
operations $\rho/(\sigma\cap\rho)$ and $(\sigma\cap\rho)\backslash\rho$.

However, another
(infinite) family of operations discussed in \cite[Section~1.1]{BJ}
can be incorporated nicely into our restricted relation algebras.
These are typified by the $5$-ary operation $Q$ defined by
\begin{equation}\begin{minipage}[c]{23pc}\label{d.Q}
$Q(\rho_1,\,\sigma_1,\,\rho_2,\,\sigma_2,\,\tau)\ =
\{(i,j)\in I\times I:(\exists\ k,\,\ell\in I)\\[.2em]
\hspace*{3em}(i,k)\in\rho_1,\ (k,j)\in\sigma_1,
\ (i,\ell)\in\rho_2,\ (\ell,j)\in\sigma_2,\ (k,\ell)\in\tau\}$
\end{minipage}\end{equation}
\cite[p.\,247]{BJ}.
One finds that this corresponds as in
Theorem~\ref{T.main} to the $5$-ary operation on
subsets of a group $G$,
\begin{equation}\begin{minipage}[c]{23pc}\label{d.Q_gp}
$Q(R_1,\,S_1,\,R_2,\,S_2,\,T)\ =
\ \{f\in|G|:(\exists\ g, h\in |G|)\\[.2em]
\hspace*{3em}g\in R_1,\ g^{-1}f\in S_1,
\ h\in R_2,\ h^{-1}f\in S_2,\ g^{-1}h\in T\}\,$.
\end{minipage}\end{equation}
Each of the operations in the family exemplified by $Q$ is
determined by a finite directed graph with two distinguished vertices,
and edges labeled by the arguments of the operation.
E.g., $Q$ corresponds to the graph
\raisebox{2.5pt}[12pt][7pt]{
\begin{picture}(34,12)
\put(2,0){\circle*{2}}
\put(4,1){\vector(2,1){10}}
\put(2,0){\circle*{2}}
\put(4,-2){\vector(2,-1){10}}
\put(16,7){\circle*{2}}
\put(18,6){\vector(2,-1){10}}
\put(16,-7){\circle*{2}}
\put(18,-7){\vector(2,1){10}}
\put(16,5){\vector(0,-1){10}}
\put(30,0){\circle*{2}}
\end{picture}},
where the distinguished
vertices are those at the left and right ends, and
where $\rho_1$ and $\sigma_1$ label the top two edges,
$\rho_2$ and $\sigma_2$ the bottom two,
and $\tau$ the vertical edge~\cite[p.\,248]{BJ}.
(The operations $\cap$, $\circ$, $^{-1}$,
$\Delta$ and $1$ also correspond in this way to graphs, namely
\raisebox{2.5pt}{
\begin{picture}(20,6)
\put(2,0){\circle*{2}}
\put(5,2){\vector(1,0){10}}
\put(5,-2){\vector(1,0){10}}
\put(18,0){\circle*{2}}
\end{picture}},
\raisebox{2.5pt}{
\begin{picture}(36,0)
\put(2,0){\circle*{2}}
\put(5,0){\vector(1,0){10}}
\put(18,0){\circle*{2}}
\put(21,0){\vector(1,0){10}}
\put(34,0){\circle*{2}}
\end{picture}},
\raisebox{2.5pt}{
\begin{picture}(22,3)
\put(2,0){\circle*{2}}
\put(15,0){\vector(-1,0){10}}
\put(18,0){\circle*{2}}
\end{picture}},
\raisebox{2.5pt}{
\begin{picture}(5,3)
\put(2,0){\circle*{2}}
\end{picture}},\hspace*{.3em}
and
\raisebox{2.5pt}{
\begin{picture}(12,3)
\put(2,0){\circle*{2}}
\put(9,0){\circle*{2}}
\end{picture}},\hspace*{.3em}
in each case with the leftmost and rightmost vertices distinguished,
and with appropriate labeling of the edges by
the arguments of the operation.
On the other hand, the operations $\cup$, $^c$, and $0$
do not belong to this family, nor do
the two operations of~\eqref{d.resid}.)

For each such labeled graph, we likewise get an operation on subsets
of groups which relates to the operation on binary relations
as in Theorem~\ref{T.main}.

\subsection{Some thoughts and questions}\label{SS.So?}
I do not know whether the variants
sketched in Sections~\ref{SS.small/big}-\ref{SS.BJ}
of the main construction of this paper are likely to prove
``useful'', either in answering group-theoretic
questions not answered by Theorem~\ref{T.main},
or in other ways (e.g., whether the measure-theoretic variant of
our construction might give some information on the structure
of measurable maps among measure spaces).

The referee has raised the question of which finite relation
algebras can be embedded in the restricted subset algebras $p(G)$
of \emph{finite} groups $G$.
One property that a relation algebra which can be
so embedded must have is that every
$\circ$-\emph{idempotent} element
is $^{-1}$-{\nolinebreak}invariant,
since in $p(G)$, every element idempotent with
respect to ``$\cdot$''
is a nonempty subset of $G$ closed under multiplication,
hence, if $G$ is finite, a subgroup.
I don't know whether there are additional restrictions.

The referee has also asked whether Theorem~\ref{T.main}
can be generalized to non-reflexive relations.
Since our ``Hilbert's Hotel'' trick is based on keeping
``most'' members of each $X_i$ within $X_i$, it has no
obvious extension to the non-reflexive case; so this interesting
question would require a different approach.

\subsection{A family resemblance.}\label{SS.near_u_d}
We end with an observation of a different sort.

I claim that every case of
a group $G$, a submonoid $M$ of $G$, and
a positive integer $n$ such that
\begin{equation}\begin{minipage}[c]{23pc}\label{d.not_till_n}
the $n$-fold
product $M\,M^{-1} M\,M^{-1} \dots\,M^{(-1)^{n-1}}$ equals $G$,\\
but the product of the first $n-1$ of these factors does not,
\end{minipage}\end{equation}
closely resembles, in a way, the example of Section~\ref{S.u_d}.
To see this, let $X=G$, on which we let $G$ act
by right translation, and let $X_1=M$,
$X_2=M\,M^{-1} - M$ (relative complement),
$X_3=M\,M^{-1} M - M\,M^{-1}$, and so on.
By~\eqref{d.not_till_n}, $X_n\neq\emptyset$ but $X_{n+1}=\emptyset$,
from which it is easy to deduce that
$X_i$ is nonempty for $1\leq i\leq n$, and empty for all $i>n$.

I claim that
\begin{equation}\begin{minipage}[c]{23pc}\label{d.odd}
$X_i\,M\ \subseteq\ X_i$\quad if $i$ is odd,
\end{minipage}\end{equation}
while
\begin{equation}\begin{minipage}[c]{23pc}\label{d.even}
$X_i\,M\ \subseteq\ X_{i-1}\cup X_i \cup X_{i+1}$\quad if $i$ is even.
\end{minipage}\end{equation}

To see~\eqref{d.odd}, let $i$ be odd and consider any element
$x = g_1\,g_2^{-1}\dots\,g_i \in X_i$
$(g_1,\,g_2,\dots,\,g_i\in M)$, and any $h\in M$.
Clearly, $xh$ still belongs to the \mbox{$i$-fold} product
$M\,M^{-1}\dots\,M$
so we need only show that we cannot write
$xh = g'_1\,{g'_2}^{-1}\dots\,{g'_{i-1}}^{\hspace*{-.8em}-1}$
with all $g'_j \in M$.
And indeed, if we could, we would have
$x = g'_1\,{g'_2}^{-1}\dots\,(h\,g'_{i-1})^{-1}$,
an expression as a length\,-\,$i{-}1$ product,
contradicting our assumption that $x\in X_i$.
The assertion~\eqref{d.even} is verified similarly:
the product of an element $x\in X_i$ and an element $h\in M$
can be written as an $i{+}1$-fold alternating product
of members of $M$ and $M^{-1}$, and if we could write it
as a product of the same sort having fewer than $i-1$ terms,
this would lead to a representation of $x$ having fewer than
$i$ terms, again contradicting the condition~$x\in X_i$.

Clearly,~\eqref{d.odd} and~\eqref{d.even}
have the same form as the restrictions pictured in~\eqref{d.u_d}.

Of course, this similarity
does not extend to the assertion that elements of $G$ move
only finitely many members of $X$ from one $X_i$ to another.

\section{Acknowledgements}\label{S.ackn}

I am indebted to Ralph McKenzie for a fruitful correspondence
on the 1981 draft of this note, to Charles Pugh for the
measure-theoretic result quoted in Section~\ref{SS.small/big},
to Dawid Kielak for the question answered in \S\ref{SS.fg},
and to the referee for several helpful comments.


\begin{thebibliography}{00}


\bibitem{123oo}
   Gamow, G.:
   One, two, three \dots infinity.
   Viking Press (1947)

\bibitem{BJ}
   J\'{o}nsson, B.:
   The theory of binary relations.
   In:
   Algebraic logic (Budapest, 1988).
   Colloq. Math. Soc. J\'{a}nos Bolyai, vol.~54, pp.~245--292.
   North-Holland, Amsterdam (1991)
   MR1153429

\bibitem{mck}
   McKenzie, R.:
   Representations of integral relation algebras.
   Michigan Math. J.
   \textbf{17} 279--287 (1970)
   MR0286735

\bibitem{Tagung}
   Siebzehnte internationale Tagung \"{u}ber
   Funktionalgleichungen in Oberwolfach vom 17.6. bis 23.6.1979,
   Aequationes Mathematicae
   \textbf{20} 286--315 (1980)
   MR1553872

\bibitem{wiki}
   Wikipedia:
   Relation algebra,
   \url{https://en.wikipedia.org/wiki/Relation_algebra}\,.

\end{thebibliography}
\end{document}